\begin{document}

\author{Lars Simon}
\address{Lars Simon, Department of Mathematical Sciences, Norwegian University of Science and Technology, Trondheim, Norway}
\email{lars.simon@ntnu.no}

\author{Berit Stens\o nes}
\address{Berit Stens\o nes, Department of Mathematical Sciences, Norwegian University of Science and Technology, Trondheim, Norway}
\email{berit.stensones@ntnu.no}

\thanks{The second author is supported by the Research Council of Norway, Grant number 240569/F20.}
\thanks{This work was done during the international research program "Several Complex Variables and Complex Dynamics" at the Centre for Advanced Study at the Academy of Science and Letters in Oslo during the academic year 2016/2017.}
\title{On Newton Diagrams of Plurisubharmonic Polynomials}

%
%    General info
%

\subjclass[2010]{Primary 32T25.  Secondary 32C25.}
\keywords{Bumping, plurisubharmonic polynomial, Newton diagram, finite-type domain, extreme edge.}

\begin{abstract}
Each extreme edge of the Newton diagram of a plurisubharmonic polynomial on $\mathbb{C}^2$ gives rise to a plurisubharmonic polynomial. It is tempting to believe that the union of the extreme edges or the convex hull of said union will do the same. We construct a plurisubharmonic polynomial $P$ on $\mathbb{C}^2$ with precisely two extreme edges $E_1$ and $E_2$, such that neither $E_1\cup{E_2}$ nor $\text{Conv}({E_1\cup{}E_2})$ yields a plurisubharmonic polynomial.
\end{abstract}

\maketitle

\section{Introduction}\label{introduction}
It is a well-known fact that it is possible to solve the $\overline{\partial}$-equation with supnorm estimates for sufficiently regular $\overline{\partial}$-closed $(0,1)$-forms on bounded strictly pseudoconvex domains in $\mathbb{C}^n$ with boundary of class $\mathcal{C}^2$. This was shown by H.\ Grauert and I.\ Lieb \cite{GrauertLieb} and G.M.\ Henkin \cite{Henkin} in the case of higher boundary regularity and by N.\ {\O}vrelid \cite{Ovrelid} for boundaries of class $\mathcal{C}^2$.

If, however, $\Omega\subseteq\mathbb{C}^n$ is a bounded weakly pseudoconvex domain with boundary of class $\mathcal{C}^{\infty}$, it is not necessarily possible to solve the $\overline{\partial}$-equation with supnorm estimates. In fact, N.\ Sibony \cite{Sibony} has constructed a bounded weakly pseudoconvex domain $D\subseteq\mathbb{C}^3$ with $\mathcal{C}^{\infty}$-boundary which admits a $\overline{\partial}$-closed $(0,1)$-form $\Phi\in\mathcal{C}_{0,1}^{\infty}(D)\cap\mathcal{C}_{0,1}^{0}(\overline{D})$, such that the equation $\overline{\partial}\Psi=\Phi$ has no bounded solution on $D$.

It hence becomes and interesting question which additional assumptions on a bounded weakly pseudoconvex domain $\Omega\subseteq\mathbb{C}^n$ with smooth boundary guarantee the existence of supnorm estimates for solutions of $\overline{\partial}u=f$, where $f$ is a sufficiently regular $\overline{\partial}$-closed $(0,1)$-form on $\Omega$.\\
R.M.\ Range \cite{Range} has shown that supnorm (and even H{\"o}lder) estimates {\emph{do}} exist for bounded smoothly bounded pseudoconvex domains of finite type in $\mathbb{C}^2$. Later K.\ Diederich, B.\ Fischer and J.E.\ Forn{\ae}ss \cite{DiederichFischerFornaess} obtained estimates for bounded smoothly bounded convex domains of finite type in $\mathbb{C}^n$.

One of the crucial ingredients in Range's argument is the {\emph{local bumping}} of the domain at a boundary point. Following \cite{BharaliStensones}, one defines a local bumping of a smoothly bounded pseudoconvex domain $\Omega\subseteq\mathbb{C}^n$, $n\geq{2}$, at a boundary point $\zeta\in\partial\Omega$ to be a triple $(\partial\Omega{},U_{\zeta},\rho_{\zeta})$, such that:
\begin{itemize}
\item{$U_{\zeta}\subseteq\mathbb{C}^n$ is an open neighborhood or $\zeta$,}
\item{$\rho_{\zeta}\colon{}U_{\zeta}\to\mathbb{R}$ is smooth and plurisubharmonic,}
\item{$\rho_{\zeta}^{-1}(\{0\})$ is a smooth hypersurface in $U_{\zeta}$ that is pseudoconvex from the side $U_{\zeta}^{-}:=\{z\colon{}\rho_{\zeta}(z)<0\}$,}
\item{$\rho_{\zeta}(\zeta)=0$, but $\rho_{\zeta}<0$ on $U_{\zeta}\cap\left(\overline{\Omega}\setminus{\{\zeta\}}\right)$.}
\end{itemize}
Given a bounded smoothly bounded pseudoconvex domain $D$ of finite type in $\mathbb{C}^2$, Range proceeds by producing a bumping $D_p$ of $D$ at a boundary point $p\in\text{b}D$, fitting large polydiscs centered in $D$ into $D_p$ and thus obtaining good pointwise estimates for holomorphic functions using the Cauchy estimates. This in turn he uses to construct integral kernels for the $\overline{\partial}$-equation satisfying the necessary estimates. The finite type condition is necessary to ensure that the above-mentioned polydiscs are large enough.

When the dimension is increased, however, it becomes much harder to construct local bumpings of the domain. For the remainder of this section let $\Omega\subseteq\mathbb{C}^n$, $n\geq{2}$, be a bounded pseudoconvex domain with real-analytic boundary. In this situation, K.\ Diederich and J.E.\ Forn{\ae}ss have shown in \cite{DiederichFornaess2} that local bumpings always exist at each boundary point. This, however, is a priori not enough to construct good integral kernels and hence obtain supnorm or H{\"o}lder estimates for $\overline{\partial}$, since the order of contact between $\partial\Omega$ and the boundary of the bumped out domain at a boundary point $p\in\partial\Omega$ can be a lot higher than the type of the domain $\Omega$ when $n\geq{3}$.\\
The goal hence becomes to construct a local bumping of $\Omega$ at a boundary point $p\in\partial\Omega$, such that the order of contact between $\partial\Omega$ and the boundary of the bumped out domain at $p$ does not exceed the type of the domain in any direction. It should be noted that $\Omega$ is of finite type, as was shown by K.\ Diederich and J.E.\ Forn{\ae}ss \cite{DiederichFornaess}.

So let $p$ be a boundary point of $\Omega$. After a holomorphic change of coordinates one can assume that $p=0$ and that the domain is given as follows:
\begin{align*}
\phantom{=} & \Omega\cap{V}\\
= & \{(\zeta,z)\in{}(\mathbb{C}\times\mathbb{C}^{n-1})\cap{V}\colon{}\operatorname{Re}(\zeta)+r(z)+\mathcal{O}(|\operatorname{Im}(\zeta)|^2,|z|\cdot{}|\operatorname{Im}(\zeta)|)<0\}\text{,}
\end{align*}
where $V$ is a small open neighborhood of $p=0$ and $r$ is a real-valued real-analytic function defined on an open neighborhood of $0\in\mathbb{C}^{n-1}$. Furthermore $r$ can be chosen to be of the form
\begin{align*}
r(z)=\sum_{j=2k}^{\infty}{P_j(z)}\text{,}
\end{align*}
where $P_j$ is a homogeneous polynomial in $z$ and $\overline{z}$ of degree $j$ and $P_{2k}\not\equiv{0}$ (i.e.\ the lowest-degree term of $r$ has degree $2k$, which is less or equal to the type of $\Omega$ at $p=0$) and $P_{2k}$ is plurisubharmonic but not pluriharmonic. In the special case $\Omega\subseteq\mathbb{C}^2$ one can show that it is possible to find such a local description, such that $2k$ is actually equal to the type of the domain at $p=0$. By absorbing all pluriharmonic terms of $P_{2k}$ into the real part of $\zeta$, one can assume that $P_{2k}$ has no pluriharmonic terms.\\
When $\Omega\subseteq\mathbb{C}^2$, J.E.\ Forn{\ae}ss and N.\ Sibony \cite{FornaessSibony} have shown that the domain can be bumped to order $2k$, the type of the domain. Further A.\ Noell \cite{MR1207878} showed that if $P_{2k}$ is additionally assumed to not be harmonic along any complex line through $0\in\mathbb{C}^{n-1}$ this is still the case. But if $P_{2k}$ is allowed to be harmonic along complex lines through $0$, things become much more complicated.

Noell proceeded by showing that there exist an $\mathbb{R}$-homogeneous function $\widetilde{P}_{2k}\colon\mathbb{C}^{n-1}\to\mathbb{R}$ of degree $2k$ and a constant $\epsilon{}>0$, such that
\begin{align*}
P_{2k}(z)-\widetilde{P}_{2k}(z)\geq{}\epsilon{}|z|^{2k}\text{ for all }z\in\mathbb{C}^{n-1}\text{,}
\end{align*}
and such that $\widetilde{P}_{2k}$ is smooth and strictly plurisubharmonic on $\mathbb{C}^{n-1}\setminus{\{0\}}$.

The next step is to look for similar results without assuming $P_{2k}$ to not be harmonic along any complex line through $0$. In this case, however, one can not expect to obtain an inequality as strong as the one in Noell's result, since that would lead to a violation of the strong maximum principle for subharmonic functions along a complex line through $0$ along which $P_{2k}$ is harmonic (i.e.\ vanishes, since $P_{2k}$ does not have any pluriharmonic terms). A similar argument also shows that one can not expect to get something {\em{strictly}} plurisubharmonic on $\mathbb{C}^{n-1}\setminus{\{0\}}$.

Assume $n=3$ for the remainder of this section. In this situation G.\ Bharali and B.\ Stens{\o}nes \cite{BharaliStensones} have obtained bumping results for the polynomial $P_{2k}\colon\mathbb{C}^2\to\mathbb{R}$ in two different cases. They prove that that $P_{2k}$ is harmonic along at most finitely many complex lines through $0$, which, in one of the two cases, allows them to combine local bumpings in conical neighborhoods of said lines using a gluing argument.\\
Since $P_{2k}$ can be harmonic along complex lines through $0$, however, this does not necessarily lead to a bumping of the domain $\Omega$. This paper deals with the problem of finding a bumping for the domain $\Omega$ in the case $n=3$ and provides a counterexample to a proposed strategy.

\section{Motivating Examples}\label{motivatingexamples}
Let $\Omega$ be a bounded pseudoconvex domain with real-analytic boundary in $\mathbb{C}^3$ and $p\in\partial{\Omega}$. As in the introduction, after a holomorphic change of coordinates, one can assume that $p=0$ and that
\begin{align*}
\phantom{=} & \Omega\cap{V}\\
= & \{(\zeta,z,w)\in{}\mathbb{C}^{3}\cap{V}\colon{}\operatorname{Re}(\zeta)+r(z,w)+\mathcal{O}(|\operatorname{Im}(\zeta)|^2,|(z,w)|\cdot{}|\operatorname{Im}(\zeta)|)<0\}\text{,}
\end{align*}
where $V$ is a small open neighborhood of $p=0$ and $r$ is a real-valued real-analytic function defined on an open neighborhood of $0\in\mathbb{C}^{2}$. Since this paper is on a counterexample, we limit ourselves to the case where $r$ is a plurisubharmonic polynomial. By absorbing all pluriharmonic terms into the real part of $\zeta$, one can assume that $r$ has no pluriharmonic terms. Write
\begin{align*}
r(z,w)=\sum_{j=2k}^{M}{P_j(z,w)}\text{,}
\end{align*}
where $P_j$ is a homogeneous polynomial in $z,\overline{z},w,\overline{w}$ of degree $j$ and $P_{2k}\not\equiv{0}$ is plurisubharmonic.

If the remainder
\begin{align*}
R(z,w):=r(z,w)-P_{2k}(z,w)=\sum_{j=2k+1}^{M}{P_j(z,w)}
\end{align*}
is plurisubharmonic then a bumping with the desired properties exists in many cases. The situation is not usually that simple however, so a different strategy is needed when the remainder $R$ is not assumed to be plurisubharmonic.

\allowdisplaybreaks[0]

\theoremstyle{definition}
\newtheorem{erstesbeispiel}[propo]{Example}
\begin{erstesbeispiel}
\label{erstesbeispiel}
Assume $\Omega$ is given as follows locally around $0$:
\begin{align*}
\Omega\cap{V}=\{(\zeta,z,w)\in{}\mathbb{C}^{3}\cap{V}\colon{}\operatorname{Re}(\zeta)+P(z,w)<0\}\text{,}
\end{align*}
where
\begin{align*}
P(z,w)= & |z|^6|w|^{8}-2\operatorname{Re}(z^3w^4\overline{z^5w^3})+|z|^4|w|^{12}+|z|^{10}|w|^{6}-2\operatorname{Re}(zw^{10}\overline{z^2w^6})\\
& +|z|^{18}|w|^{4}+|z|^{2}|w|^{20}-2\operatorname{Re}(z^9w^{2}\overline{z^{17}w})+|z|^{34}|w|^{2}+\left\lVert{(z,w)}\right\rVert^{1000}\text{.}
\end{align*}
Define singular holomorphic coordinate changes $\Phi_1,\Phi_2,\Phi_3\colon\mathbb{C}^2\to\mathbb{C}^2$ by
\begin{align*}
\Phi_1(z,w) & =\left(z^4,w\right)\text{,}\\
\Phi_2(z,w) & =\left(z,w^2\right)\text{,}\\
\Phi_3(z,w) & =\left(z,w^8\right)\text{.}
\end{align*}
We compute:
\begin{align*}
\left({}P\circ\Phi_1\right)(z,w) & =|z|^8|w|^{20}-2\operatorname{Re}(z^4w^{10}\overline{z^8w^6})+|z|^{16}|w|^{12}\\
& \phantom{=}+(higher\text{-}order\text{ }terms)\\
& =\left|z^{4}w^{10}-z^{8}w^{6}\right|^2+(higher\text{-}order\text{ }terms)\text{,}\\
\left({}P\circ\Phi_2\right)(z,w) & =|z|^6|w|^{16}-2\operatorname{Re}(z^3w^{8}\overline{z^5w^6})+|z|^{10}|w|^{12}\\
& \phantom{=}+(higher\text{-}order\text{ }terms)\\
& =\left|z^{3}w^{8}-z^{5}w^{6}\right|^2+(higher\text{-}order\text{ }terms)\text{,}\\
\left({}P\circ\Phi_3\right)(z,w) & =|z|^{18}|w|^{32}-2\operatorname{Re}(z^9w^{16}\overline{z^{17}w^8})+|z|^{34}|w|^{16}\\
& \phantom{=}+(higher\text{-}order\text{ }terms)\\
& =\left|z^{9}w^{16}-z^{17}w^{8}\right|^2+(higher\text{-}order\text{ }terms)\text{.}
\end{align*}
For $j\in\{1,2,3\}$, the lowest-order homogeneous summand of $P\circ\Phi_j$ corresponds to the summand $P^{(j)}$ in the Taylor expansion of $P$ around $0$, where
\begin{align*}
P^{(1)}(z,w) & =\left|zw^{10}-z^{2}w^{6}\right|^2\text{,}\\
P^{(2)}(z,w) & =\left|z^{3}w^{4}-z^{5}w^{3}\right|^2\text{,}\\
P^{(3)}(z,w) & =\left|z^{9}w^{2}-z^{17}w\right|^2\text{.}
\end{align*}
$P^{(1)}$, $P^{(2)}$ and $P^{(3)}$ are plurisubharmonic. This is not a coincidence: $P$ is plurisubharmonic and $\Phi_j$, $j\in\{1,2,3\}$, is holomorphic, so the lowest order homogeneous summand of $P\circ\Phi_j$ is plurisubharmonic as well, which (despite $\Phi_j$ being a {\emph{singular}} holomorphic coordinate change) leads to $P^{(j)}$ being plurisubharmonic. $P^{(1)}$, $P^{(2)}$ and $P^{(3)}$ have pairwise no monomial in common, so:
\begin{align*}
P=P^{(1)}+P^{(2)}+P^{(3)}+(remaining\text{ }terms)\text{,}
\end{align*}
where the $(remaining\text{ }terms)$ consists of a finite (possibly empty) sum of monomials, each appearing with the same coefficient as the corresponding monomial in the Taylor expansion of $P$ around $0$. By direct computation one easily verifies that
\begin{align*}
P(z,w)=P^{(1)}(z,w)+P^{(2)}(z,w)+P^{(3)}(z,w)+\left\lVert{(z,w)}\right\rVert^{1000}\text{.}
\end{align*}
So we have written $P$ as a sum of four plurisubharmonic weighted-homogeneous polynomials. It is obvious how to bump $P$. In a more general setting one could attempt to use the bumping results for weighted-homogeneous plurisubharmonic polynomials in \cite{BharaliStensones} to bump each summand separately.
\end{erstesbeispiel}

\theoremstyle{definition}
\newtheorem{zweitesbeispiel}[propo]{Example}
\begin{zweitesbeispiel}
\label{zweitesbeispiel}
Assume $\Omega$ is given as follows locally around $0$:
\begin{align*}
\Omega\cap{V}=\{(\zeta,z,w)\in{}\mathbb{C}^{3}\cap{V}\colon{}\operatorname{Re}(\zeta)+P(z,w)<0\}\text{,}
\end{align*}
where
\begin{align*}
P(z,w) & =|z|^6-2\operatorname{Re}(z^3\overline{z^2w^2})+2|z|^4|w|^{4}\\
& \phantom{{}=}-2\operatorname{Re}(z^2w^2\overline{w^{10}})+|w|^{20}+\left\lVert{(z,w)}\right\rVert^{1000}\text{.}
\end{align*}
Analogously to Example \ref{erstesbeispiel}, one defines singular holomorphic coordinate changes $\Phi_1,\Phi_2\colon\mathbb{C}^2\to\mathbb{C}^2$ by
\begin{align*}
\Phi_1(z,w) & =\left(z^2,w\right)\text{,}\\
\Phi_2(z,w) & =\left(z^4,w\right)\text{,}
\end{align*}
and computes:
\begin{align*}
\left({}P\circ\Phi_1\right)(z,w) & =|z|^{12}-2\operatorname{Re}(z^6\overline{z^4w^2})+2|z|^{8}|w|^{4}+(higher\text{-}order\text{ }terms)\text{,}\\
\left({}P\circ\Phi_2\right)(z,w) & =2|z|^{16}|w|^{4}-2\operatorname{Re}(z^8w^{2}\overline{w^{10}})+|w|^{20}+(higher\text{-}order\text{ }terms)\text{.}
\end{align*}
For $j\in\{1,2\}$, the lowest-order homogeneous summand of $P\circ\Phi_j$ corresponds to the summand $P^{(j)}$ in the Taylor expansion of $P$ around $0$, where
\begin{align*}
P^{(1)}(z,w) & =|z|^6-2\operatorname{Re}(z^3\overline{z^2w^2})+2|z|^4|w|^{4}\text{,}\\
P^{(2)}(z,w) & =2|z|^4|w|^{4}-2\operatorname{Re}(z^2w^2\overline{w^{10}})+|w|^{20}\text{.}
\end{align*}
Analogously to the previous example, one argues that $P^{(1)}$ and $P^{(2)}$ are plurisubharmonic. But now the polynomials $P^{(1)}$ and $P^{(2)}$ share the summand $2|z|^4|w|^{4}$, so one can {\emph{not}} proceed analogously to Example \ref{erstesbeispiel}.\\
Splitting up the shared summand, however, one can write:
\begin{align*}
P(z,w)=\widetilde{P}^{(1)}(z,w)+\widetilde{P}^{(2)}(z,w)+\left\lVert{(z,w)}\right\rVert^{1000}\text{,}
\end{align*}
where
\begin{align*}
\widetilde{P}^{(1)}(z,w) & =|z|^6-2\operatorname{Re}(z^3\overline{z^2w^2})+|z|^4|w|^{4}\\
& =\left|z^{3}-z^{2}w^2\right|^2\text{,}\\
\widetilde{P}^{(2)}(z,w) & =|z|^4|w|^{4}-2\operatorname{Re}(z^2w^2\overline{w^{10}})+|w|^{20}\\
& =\left|z^{2}w^{2}-w^{10}\right|^2\text{.}
\end{align*}
$\widetilde{P}^{(1)}$ and $\widetilde{P}^{(2)}$ are obviously plurisubharmonic and hence we have once again written $P$ as a sum of plurisubharmonic weighted-homogeneous polynomials, each of which we can attempt to bump individually.
\end{zweitesbeispiel}

\allowdisplaybreaks

So, in both Example \ref{erstesbeispiel} and Example \ref{zweitesbeispiel}, we used certain singular holomorphic coordinate changes to express $P$ as a sum of weighted-homogeneous plurisubharmonic polynomials. While the algorithmic procedure we applied will not always yield such a decomposition, the existence of said coordinate changes is not a coincidence: in both examples, each coordinate change corresponds to an {\emph{extreme edge}} (see Def.\ \ref{extremeedge} below) of the real-valued plurisubharmonic polynomial $P$.

\section{The Problem}\label{theproblem}
Most of the definitions and lemmas in this section are taken from \cite{FornStens2010}. From now on, all occurring polynomials are assumed to be polynomials with complex coefficients in two complex variables $(z,w)$ and their conjugates $(\overline{z},\overline{w})$.

Let $P$ be a real-valued polynomial. We write
\begin{align*}
P=\sum_{(A,B)\in\mathbb{Z}_{\geq{0}}\times\mathbb{Z}_{\geq{0}}}^{}{P_{A,B}}\text{,}
\end{align*}
where $P_{A,B}$ is homogeneous of degree $A$ in $z, \overline{z}$ and homogeneous of degree $B$ in $w, \overline{w}$. Note that this decomposition is unique and that each $P_{A,B}$ is real-valued.

\theoremstyle{definition}
\newtheorem{extremeedge}[propo]{Definition}
\begin{extremeedge}
\label{extremeedge}
Let $P$ be a real-valued polynomial. We define the {\emph{Newton diagram}} $N(P)$ of $P$ to be the following subset of $\mathbb{R}^2$:
\begin{align*}
N(P)=\{(A,B)\in\mathbb{Z}_{\geq{0}}\times\mathbb{Z}_{\geq{0}}\colon{}P_{A,B}\not\equiv{0}\}\text{.}
\end{align*}
We make the following definitions:
\begin{itemize}
\item{A non-empty subset $X\subseteq{}N(P)$ is called an {\emph{extreme set}} if there exist $a,b\in\mathbb{R}$ with $a<0$, such that
\begin{align*}
& B=aA+b\text{ for all }(A,B)\in{}X\\
& B>aA+b\text{ for all }(A,B)\in{}N(P)\setminus{}X\text{.}
\end{align*}}
\item{A point $(A_0,B_0)\in{N(P)}$ is called an {\emph{extreme point}} if $\{(A_0,B_0)\}$ is an extreme set.}
\item{A subset $E\subseteq{}N(P)$ is called an {\emph{extreme edge}} if $E$ is an extreme set of cardinality at least $2$.}
\end{itemize}
\end{extremeedge}

\theoremstyle{definition}
\newtheorem{polysetindex}[propo]{Notation}
\begin{polysetindex}
\label{polysetindex}
Let $P$ be a real-valued polynomial and let $S\subseteq\mathbb{R}^2$. We define the real-valued polynomial $P_S$ as follows:
\begin{align*}
P_S:=\sum_{(A,B)\in{}N(P)\cap{}S}^{}{P_{A,B}}\text{.}
\end{align*}
Note that $P_S\equiv{0}$ if and only if $N(P)\cap{}S=\emptyset$.
\end{polysetindex}

\theoremstyle{definition}
\newtheorem{complehessiannotation}[propo]{Notation}
\begin{complehessiannotation}
\label{complehessiannotation}
Let $P$ be a real-valued polynomial. We denote the Complex Hessian Matrix or the Levi Matrix of $P$ as $H_P$,
\begin{align*}
\arraycolsep=0.2pt\def\arraystretch{1.4}
H_P=\left( \begin{array}{ccc}
\frac{\partial^2 P}{\partial{z}\partial{\overline{z}}} & \frac{\partial^2 P}{\partial{w}\partial{\overline{z}}} \\
\frac{\partial^2 P}{\partial{z}\partial{\overline{w}}} & \frac{\partial^2 P}{\partial{w}\partial{\overline{w}}} \end{array} \right)\text{.}
\end{align*}
\end{complehessiannotation}

The following two lemmas demonstrate that the concepts introduced in this section are significant when considering plurisubharmonic polynomials:

\theoremstyle{plain}
\newtheorem{finitelymanyextremeedges}[propo]{Lemma}
\begin{finitelymanyextremeedges}
\label{finitelymanyextremeedges}
Let $P$ be a real-valued polynomial. Then the Newton diagram $N(P)$ has finitely many extreme sets.
\end{finitelymanyextremeedges}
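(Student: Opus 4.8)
The plan is to reduce this to the finiteness of $N(P)$ as a point set. First I would record that, since $P$ is a polynomial, it is a finite $\mathbb{C}$-linear combination of monomials $z^{a}\overline{z}^{b}w^{c}\overline{w}^{d}$; collecting these monomials according to the value of the pair $(a+b,\,c+d)\in\mathbb{Z}_{\geq 0}\times\mathbb{Z}_{\geq 0}$ produces exactly the decomposition $P=\sum_{(A,B)}P_{A,B}$, and only finitely many pairs $(A,B)$ arise in it. Hence $N(P)=\{(A,B):P_{A,B}\not\equiv 0\}$ is a finite subset of $\mathbb{Z}_{\geq 0}\times\mathbb{Z}_{\geq 0}$.

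By Definition \ref{extremeedge}, every extreme set — and in particular every extreme edge — is a subset of $N(P)$. A finite set has only finitely many subsets, so $N(P)$ admits only finitely many extreme sets, which is the assertion. There is no real obstacle here: the lemma is essentially just the remark that the notion of ``extreme set'' is applied only to the finite point set $N(P)$.

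If one prefers a statement carrying more geometric content, I would instead observe that a non-empty subset $X\subseteq N(P)$ is an extreme set precisely when $X$ is the set of points of $N(P)$ at which the linear functional $(A,B)\mapsto B-aA$, for some fixed $a<0$, attains its minimum over $N(P)$; such minima are attained on faces of the convex polygon $\operatorname{Conv}(N(P))$, namely on the faces that support $N(P)$ from below along a line of negative slope. Since a convex polygon has only finitely many faces, this again gives the conclusion, and it additionally explains why the extreme edges are exactly the edges on the ``lower-left'' part of the boundary of $\operatorname{Conv}(N(P))$, while the extreme points are the corresponding vertices. Either route works; I would present the short counting argument as the proof and, if desired, mention the convex-hull description as a remark.
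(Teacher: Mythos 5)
Your proof is correct: since $N(P)$ is a finite set and every extreme set is by definition a subset of $N(P)$, the finiteness is immediate from the crude count of subsets, and your convex-hull remark accurately identifies which subsets actually occur. The paper omits the proof entirely (citing its source for the lemma), but your counting argument is exactly the intended, essentially trivial, justification.
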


\theoremstyle{plain}
\newtheorem{extremeedgegivespsh}[propo]{Lemma}
\begin{extremeedgegivespsh}
\label{extremeedgegivespsh}
Let $P$ be a real-valued polynomial and furthermore assume that $P$ is plurisubharmonic. Then, for any extreme set $X$ of $N(P)$, the function $P_X$ is a plurisubharmonic weighted-homogeneous polynomial and there exists a natural singular holomorphic change of coordinates $\Phi$ of the form $(z,w)\mapsto (z^k,w^l)$ with $k,l\in\mathbb{Z}_{\geq 1}$, $\gcd(k,l)=1$, such that $P_X\circ\Phi$ constitutes the lowest-order homogeneous terms of $P\circ\Phi$.
\end{extremeedgegivespsh}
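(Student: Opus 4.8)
The plan is to show both assertions — weighted-homogeneity and plurisubharmonicity of $P_X$, and the existence of the monomial substitution $\Phi$ — by exploiting the defining linear inequality for an extreme set together with the fact that plurisubharmonicity is inherited by lowest-order homogeneous parts under holomorphic maps. First I would unwind Definition~\ref{extremeedge}: if $X$ is an extreme set there are $a<0$ and $b\in\mathbb{R}$ with $B=aA+b$ on $X$ and $B>aA+b$ off $X$ (within $N(P)$). Writing $a=-k/l$ in lowest terms with $k,l\in\mathbb{Z}_{\geq 1}$, $\gcd(k,l)=1$, the line through $X$ is $kA+lB=kb/a\cdot(-1)=:c$ for a suitable real constant, and I would observe that $c$ is in fact a nonnegative integer (it equals $kA_0+lB_0$ for any lattice point $(A_0,B_0)\in X$). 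The monomials appearing in $P_{A,B}$ with $(A,B)\in X$ are precisely those of bidegree $(A,B)$, so under $\Phi(z,w)=(z^l,w^k)$ every such monomial acquires total homogeneous degree $lA+kB$; but on the line $kA+lB=c$ this is not constant, so I have the substitution backwards. The correct choice is to take $\Phi(z,w)=(z^l,w^k)$ only if the weight $(l,k)$ is constant along $X$, i.e. I should weight $z$ by $l$ and $w$ by $k$ where $a=-k/l$; then a monomial of bidegree $(A,B)$ has $\Phi$-degree $lA+kB$, and along $X$ we have $B=-\tfrac{k}{l}A+b$, hence $lB=-kA+lb$, hence $lA+kB$ is \emph{not} automatically constant either unless I instead track $kA+lB$. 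Let me restate cleanly: set $\Phi(z,w)=(z^{\lambda},w^{\mu})$; a monomial $z^{p}\bar z^{q}w^{r}\bar w^{s}$ of bidegree $(p+q,r+s)=(A,B)$ is sent to a monomial of total degree $\lambda A+\mu B$. I want $\lambda A+\mu B$ to be constant exactly on $X$ and strictly larger on $N(P)\setminus X$; since $X$ lies on $B=aA+b$ with $a<0$, the functional $(A,B)\mapsto \lambda A+\mu B$ is constant on that line iff $(\lambda,\mu)\parallel(-a,1)$ up to scaling, and choosing $(\lambda,\mu)=(k,l)$ with $-a=k/l$, $\gcd(k,l)=1$, makes $\lambda A+\mu B=kA+lB$ constant on $X$; the ``extreme'' direction $a<0$ guarantees this same functional is strictly \emph{larger} on the rest of $N(P)$ (this is where $b>aA+b$ translates, after multiplying by $l>0$, into $kA+lB > c$). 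Thus $P_X\circ\Phi$ is exactly the lowest-order homogeneous part of $P\circ\Phi$ — note $\Phi$ is injective so distinct bidegrees cannot collide to cancel, and being a sum over a single line the image is genuinely homogeneous of degree $c$.

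Next I would deduce plurisubharmonicity. Since $P$ is plurisubharmonic on $\mathbb{C}^2$ and $\Phi$ is holomorphic, $P\circ\Phi$ is plurisubharmonic on $\mathbb{C}^2$. Its lowest-order homogeneous part $Q:=P_X\circ\Phi$ is then also plurisubharmonic: this is the standard scaling argument, $Q(z,w)=\lim_{t\to 0^+} t^{-c} (P\circ\Phi)(t z, t w)$ (using ordinary homogeneity of degree $c$ of $Q$, so the dilation is the usual one), and a locally uniform limit of plurisubharmonic functions is plurisubharmonic. Finally I would descend from the plurisubharmonicity of $P_X\circ\Phi$ back to that of $P_X$ itself: on the dense open set where $z\neq 0$ and $w\neq 0$, $\Phi$ is a local biholomorphism (its Jacobian $\lambda\mu z^{\lambda-1}w^{\mu-1}$ is nonvanishing there), and plurisubharmonicity is a local biholomorphic invariant, so $P_X$ is plurisubharmonic on $\{zw\neq 0\}$; since $P_X$ is a polynomial hence continuous, and $\{zw\neq 0\}$ is dense, plurisubharmonicity extends across $\{zw=0\}$ by the standard removable-singularity/continuity criterion for psh functions (a continuous function that is psh off a nowhere-dense closed set and satisfies the sub-mean-value inequality there is psh; here one can also argue directly that $P_X$, being weighted-homogeneous, is the decreasing or at least locally uniform limit of the already-psh functions $P_X(\cdot + \varepsilon)$-type regularizations, or simply invoke that $\{zw=0\}$ is pluripolar).

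The one genuinely delicate point — and the step I expect to be the main obstacle — is the parenthetical ``which \dots\ leads to $P_X$ being plurisubharmonic'' despite $\Phi$ being \emph{singular}. Pulling plurisubharmonicity back through a singular map is not automatic: $P_X\circ\Phi$ psh does not in general force $P_X$ psh if $\Phi$ is not surjective onto a full-dimensional set in the right way. What saves us here is that $\Phi(z,w)=(z^k,w^l)$ is a proper surjective branched cover of $\mathbb{C}^2$ onto $\mathbb{C}^2$, unramified over $\{zw\neq0\}$, so the pullback argument is valid on the dense open set and then the polynomial $P_X$ — being continuous and in fact weighted-homogeneous — inherits the global sub-mean-value inequality by continuity. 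I would make this rigorous by the regularization argument: for the weighted-homogeneous $P_X$, consider $u_\varepsilon(z,w):=P_X\circ\Phi$ restricted and transported appropriately; more cleanly, since $\{zw\neq 0\}$ has pluripolar complement and $P_X$ is continuous on all of $\mathbb{C}^2$ and psh on $\{zw\neq0\}$, a classical theorem (a continuous function psh off a closed pluripolar set is psh everywhere) finishes it. I would cite or reprove this last fact as needed; it is the crux that makes the ``natural singular change of coordinates'' legitimate.

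Finally, weighted-homogeneity of $P_X$ is immediate from the construction: all monomials of $P_X$ have bidegree $(A,B)$ on the line $kA+lB=c$, so $P_X$ is homogeneous of weighted degree $c$ with respect to the weights $\mathrm{wt}(z)=\mathrm{wt}(\bar z)=k$, $\mathrm{wt}(w)=\mathrm{wt}(\bar w)=l$; equivalently $P_X(t^k z, t^l w)=t^c P_X(z,w)$ for $t>0$. This also re-proves $c\in\mathbb{Z}_{\geq0}$ and records the precise $\Phi$ asserted in the lemma (after the harmless relabeling exchanging the roles of the two weights, which only swaps $k\leftrightarrow l$ and $z\leftrightarrow w$). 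Assembling these pieces — linear-algebra normalization of the extreme line, the scaling limit for psh functions, and the pluripolar removability — yields the statement.
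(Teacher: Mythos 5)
Your argument is correct and is exactly the approach the paper has in mind: the paper itself omits the proof (citing its source for this lemma), but the parenthetical reasoning in Example \ref{erstesbeispiel} — pull back by the monomial map $\Phi(z,w)=(z^k,w^l)$ determined by the slope $a=-k/l$, take the lowest-order homogeneous part via the scaling limit, and descend through the branched cover off $\{zw=0\}$ using continuity of $P_X$ — is precisely what you wrote, modulo your (ultimately self-corrected) detour over which weight goes with which variable. The only point worth tightening is the case where $X$ is a singleton extreme point, for which the slope $a$ is not forced to be rational by $X$ itself; one must note that the finitely many strict inequalities defining the extreme set are open in $a$, so a rational $a<0$ can always be chosen.
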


In the setting of Example \ref{erstesbeispiel}, the maps $\Phi_1$, $\Phi_2$ and $\Phi_3$ correspond to extreme edges, say $E_1$, $E_2$ and $E_3$, of $N(P)$ in the sense of Lemma \ref{extremeedgegivespsh} (it should be noted, however, that $N(P)$ has other extreme edges as well). Since $E_1$, $E_2$ and $E_3$ are pairwise disjoint, the polynomials $P_{E_1}$, $P_{E_2}$ and $P_{E_3}$ have pairwise no terms in common, so that
\begin{align*}
P_{E_1\cup E_2\cup E_3}=P_{E_1}+P_{E_2}+P_{E_3}
\end{align*}
is plurisubharmonic and
\begin{align*}
P(z,w)=P_{E_1}(z,w)+P_{E_2}(z,w)+P_{E_3}(z,w)+\left\lVert{(z,w)}\right\rVert^{1000}\text{.}
\end{align*}

In the setting of Example \ref{zweitesbeispiel}, the maps $\Phi_1$ and $\Phi_2$ correspond to the precisely two extreme edges, say $E_1$ and $E_2$, of $N(P)$ in the sense of Lemma \ref{extremeedgegivespsh}. Here, however, $E_1$ and $E_2$ are neighboring extreme edges, so that $P_{E_1}$ and $P_{E_2}$ have terms in common, namely $P_{E_1\cap E_2}$. But $P_{E_1\cup E_2}$ is plurisubharmonic and we found a splitting
\begin{align*}
P_{E_1\cup E_2}=\widetilde{P_{E_1}}+\widetilde{P_{E_2}}\text{,}
\end{align*}
where $\widetilde{P_{E_j}}$ is a plurisubharmonic polynomial with $N\left(\widetilde{P_{E_j}}\right)\subseteq N(P_{E_j})$, for $j\in\{1,2\}$.

In attempting to generalize the bumping strategies outlined in Examples \ref{erstesbeispiel} and \ref{zweitesbeispiel}, it becomes desirable to identify subsets of the Newton diagram of a plurisubharmonic polynomial that will yield a plurisubharmonic function in the sense of Notation \ref{polysetindex}. It is the content of Lemma \ref{extremeedgegivespsh} that extreme sets, i.e.\ extreme points and extreme edges, are examples of such subsets.\\
Specifically, it has been suspected that two ``neighboring'' extreme edges would yield a plurisubharmonic function by taking their union or by taking the convex hull of that union. A precise statement of those questions goes as follows:

\theoremstyle{definition}
\newtheorem{problemofthispaper}[propo]{Question}
\begin{problemofthispaper}
\label{problemofthispaper}
Let $P$ be a real-valued polynomial and furthermore assume that $P$ is plurisubharmonic. Let $\mathcal{E}$ denote the (possibly empty) set of extreme edges of $N(P)$.
\begin{itemize}
\item{Given extreme edges $E_1$ and $E_2$ of $N(P)$ with $E_1\neq{}E_2$ but $E_1\cap{}E_2\neq\emptyset$, is $P_{E_1\cup{}E_2}$ necessarily plurisubharmonic in some neighborhood of the origin?}
\item{Given extreme edges $E_1$ and $E_2$ of $N(P)$ with $E_1\neq{}E_2$ but $E_1\cap{}E_2\neq\emptyset$, is $P_{\text{Conv}({E_1\cup{}E_2})}$ necessarily plurisubharmonic in some neighborhood of the origin?}
\item{Is $P_{{\bigcup_{E\in\mathcal{E}}{E}}}$ necessarily plurisubharmonic in some neighborhood of the origin?}
\item{Is $P_{\text{Conv}({\bigcup_{E\in\mathcal{E}}{E}})}$ necessarily plurisubharmonic in some neighborhood of the origin?}
\end{itemize}
Here, $\text{Conv}(S)$ denotes the convex hull of a subset $S$ of $\mathbb{R}^2$.
\end{problemofthispaper}

In the following section we will construct a plurisubharmonic polynomial with precisely $2$ extreme edges, for which the answer to all of these questions is ``no''.

\section{The Counterexample}\label{thesolution}
In order to simplify the computations in the construction announced in the previous section, we state and prove the following lemma:

\theoremstyle{plain}
\newtheorem{detofcomplhess}[propo]{Lemma}
\begin{detofcomplhess}
\label{detofcomplhess}
Let $P=\sum_{\alpha\in\mathcal{A}}^{}{c_{\alpha}\cdot{}\left\vert{f_{\alpha}}\right\vert}^2$, where
\begin{itemize}
\item{$\mathcal{A}$ is a finite set,}
\item{$c_{\alpha}\in\{-1,1\}$ for all $\alpha\in\mathcal{A}$,}
\item{$f_{\alpha}\colon{}\mathbb{C}^2\to\mathbb{C}$ is a holomorphic polynomial for all $\alpha\in\mathcal{A}$.}
\end{itemize}
Then in $\mathbb{C}^2$ we have:
\begin{align*}
\det{H_P}=\frac{1}{2}\cdot\sum_{(\alpha{,}\beta)\in{}\mathcal{A}\times\mathcal{A}}^{}{c_{\alpha}{}c_{\beta}\left\vert{\frac{\partial{f_{\alpha}}}{\partial{z}}\cdot{}\frac{\partial{f_{\beta}}}{\partial{w}}-\frac{\partial{f_{\beta}}}{\partial{z}}\cdot{}\frac{\partial{f_{\alpha}}}{\partial{w}}}\right\vert^2}\text{.}
\end{align*}
\end{detofcomplhess}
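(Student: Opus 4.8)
The plan is to compute $\det H_P$ directly by differentiating the given sum $P=\sum_\alpha c_\alpha |f_\alpha|^2$ and then recognizing the resulting expression as a sum of squared $2\times 2$ determinants. First I would record the basic derivatives: since $|f_\alpha|^2 = f_\alpha \overline{f_\alpha}$ and $f_\alpha$ is holomorphic, we have $\partial_z \partial_{\bar z}|f_\alpha|^2 = (\partial_z f_\alpha)\overline{(\partial_z f_\alpha)}$, and similarly $\partial_w\partial_{\bar w}|f_\alpha|^2 = (\partial_w f_\alpha)\overline{(\partial_w f_\alpha)}$, while the mixed entries are $\partial_w\partial_{\bar z}|f_\alpha|^2 = (\partial_w f_\alpha)\overline{(\partial_z f_\alpha)}$ and $\partial_z\partial_{\bar w}|f_\alpha|^2 = (\partial_z f_\alpha)\overline{(\partial_w f_\alpha)}$. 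Writing $a_\alpha := \partial_z f_\alpha$ and $b_\alpha := \partial_w f_\alpha$ for brevity (these are holomorphic polynomials), the Levi matrix entries become $\sum_\alpha c_\alpha a_\alpha\overline{a_\alpha}$, $\sum_\alpha c_\alpha b_\alpha\overline{a_\alpha}$, $\sum_\alpha c_\alpha a_\alpha\overline{b_\alpha}$, $\sum_\alpha c_\alpha b_\alpha\overline{b_\alpha}$.

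Next I would expand the determinant:
\begin{align*}
\det H_P &= \Bigl(\sum_\alpha c_\alpha |a_\alpha|^2\Bigr)\Bigl(\sum_\beta c_\beta |b_\beta|^2\Bigr) - \Bigl(\sum_\alpha c_\alpha b_\alpha\overline{a_\alpha}\Bigr)\Bigl(\sum_\beta c_\beta a_\beta\overline{b_\beta}\Bigr)\\
&= \sum_{(\alpha,\beta)} c_\alpha c_\beta\bigl(a_\alpha\overline{a_\alpha}\,b_\beta\overline{b_\beta} - b_\alpha\overline{a_\alpha}\,a_\beta\overline{b_\beta}\bigr)\text{.}
\end{align*}
Now I would symmetrize in $\alpha$ and $\beta$: averaging the $(\alpha,\beta)$ term with the $(\beta,\alpha)$ term and using that $c_\alpha c_\beta$ is symmetric, the summand becomes $\tfrac12 c_\alpha c_\beta\bigl(a_\alpha\overline{a_\alpha}b_\beta\overline{b_\beta} + a_\beta\overline{a_\beta}b_\alpha\overline{b_\alpha} - b_\alpha\overline{a_\alpha}a_\beta\overline{b_\beta} - b_\beta\overline{a_\beta}a_\alpha\overline{b_\beta}\,\overline{\phantom{x}}\,\bigr)$; the point is that this symmetrized bracket is exactly $\tfrac12|a_\alpha b_\beta - a_\beta b_\alpha|^2 = \tfrac12(a_\alpha b_\beta - a_\beta b_\alpha)\overline{(a_\alpha b_\beta - a_\beta b_\alpha)}$, since expanding that modulus square gives $|a_\alpha|^2|b_\beta|^2 + |a_\beta|^2|b_\alpha|^2 - a_\alpha b_\beta\overline{a_\beta b_\alpha} - a_\beta b_\alpha\overline{a_\alpha b_\beta}$. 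Substituting back $a_\alpha = \partial f_\alpha/\partial z$ and $b_\alpha = \partial f_\alpha/\partial w$ yields the claimed formula.

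The argument is essentially a bookkeeping computation, so there is no serious obstacle; the one place to be careful is the symmetrization step, i.e.\ making sure the cross terms $-b_\alpha\overline{a_\alpha}a_\beta\overline{b_\beta}$ from the determinant expansion pair up correctly with their conjugates under the $\alpha\leftrightarrow\beta$ swap so that they assemble into the Hermitian expression $a_\alpha b_\beta\overline{a_\beta b_\alpha} + \overline{a_\alpha b_\beta\overline{a_\beta b_\alpha}}$ rather than something non-real. I would therefore present the expansion of $|a_\alpha b_\beta - a_\beta b_\alpha|^2$ first and then match it term by term against the symmetrized determinant, which makes the identity transparent and avoids any sign errors.
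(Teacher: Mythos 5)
Your proposal is correct and follows essentially the same route as the paper: compute the Levi matrix entries using holomorphicity of the $f_\alpha$, expand the $2\times 2$ determinant into a double sum over $(\alpha,\beta)$, and symmetrize under $\alpha\leftrightarrow\beta$ to recognize each symmetrized summand as $\tfrac12\vert a_\alpha b_\beta - a_\beta b_\alpha\vert^2$ (the paper merely collects the conjugated Wronskian factor before symmetrizing rather than after, which is a cosmetic difference). The only blemish is a typo in your displayed symmetrized bracket, where the last factor should read $\overline{b_\alpha}$ rather than $\overline{b_\beta}$; your subsequent expansion of the modulus square is the correct one.
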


\begin{proof}
We calculate:
\begin{align*}
\det{H_P} & =\left(\sum_{\alpha\in\mathcal{A}}^{}{c_{\alpha}\frac{\partial{f_{\alpha}}}{\partial{z}}}{\frac{\partial{\overline{f_{\alpha}}}}{\partial{\overline{z}}}}\right)\cdot\left(\sum_{\beta\in\mathcal{A}}^{}{c_{\beta}\frac{\partial{f_{\beta}}}{\partial{w}}}{\frac{\partial{\overline{f_{\beta}}}}{\partial{\overline{w}}}}\right)\\
& \phantom{=}-\left(\sum_{\alpha\in\mathcal{A}}^{}{c_{\alpha}\frac{\partial{f_{\alpha}}}{\partial{z}}}{\frac{\partial{\overline{f_{\alpha}}}}{\partial{\overline{w}}}}\right)\cdot\left(\sum_{\beta\in\mathcal{A}}^{}{c_{\beta}\frac{\partial{f_{\beta}}}{\partial{w}}}{\frac{\partial{\overline{f_{\beta}}}}{\partial{\overline{z}}}}\right)\\
& =\left(\sum_{\alpha\in\mathcal{A}}^{}{c_{\alpha}\frac{\partial{f_{\alpha}}}{\partial{z}}}\overline{\left(\frac{\partial{f_{\alpha}}}{\partial{z}}\right)}\right)\cdot\left(\sum_{\beta\in\mathcal{A}}^{}{c_{\beta}\frac{\partial{f_{\beta}}}{\partial{w}}}\overline{\left(\frac{\partial{f_{\beta}}}{\partial{w}}\right)}\right)\\
& \phantom{=}-\left(\sum_{\alpha\in\mathcal{A}}^{}{c_{\alpha}\frac{\partial{f_{\alpha}}}{\partial{z}}}\overline{\left(\frac{\partial{f_{\alpha}}}{\partial{w}}\right)}\right)\cdot\left(\sum_{\beta\in\mathcal{A}}^{}{c_{\beta}\frac{\partial{f_{\beta}}}{\partial{w}}}\overline{\left(\frac{\partial{f_{\beta}}}{\partial{z}}\right)}\right)\\
& =\sum_{(\alpha{},\beta)\in\mathcal{A}\times\mathcal{A}}^{}{c_{\alpha}c_{\beta}\cdot\frac{\partial{f_{\alpha}}}{\partial{z}}\cdot{}\frac{\partial{f_{\beta}}}{\partial{w}}\cdot\overline{\left({\frac{\partial{f_{\alpha}}}{\partial{z}}\cdot{}\frac{\partial{f_{\beta}}}{\partial{w}}-\frac{\partial{f_{\beta}}}{\partial{z}}\cdot{}\frac{\partial{f_{\alpha}}}{\partial{w}}}\right)}}\\
& =\frac{1}{2}\cdot{}\sum_{(\alpha{},\beta)\in\mathcal{A}\times\mathcal{A}}^{}{c_{\alpha}c_{\beta}\cdot\frac{\partial{f_{\alpha}}}{\partial{z}}\cdot{}\frac{\partial{f_{\beta}}}{\partial{w}}\cdot\overline{\left({\frac{\partial{f_{\alpha}}}{\partial{z}}\cdot{}\frac{\partial{f_{\beta}}}{\partial{w}}-\frac{\partial{f_{\beta}}}{\partial{z}}\cdot{}\frac{\partial{f_{\alpha}}}{\partial{w}}}\right)}}\\
& \phantom{=}+\frac{1}{2}\cdot{}\sum_{(\beta{},\alpha)\in\mathcal{A}\times\mathcal{A}}^{}{c_{\beta}c_{\alpha}\cdot\frac{\partial{f_{\beta}}}{\partial{z}}\cdot{}\frac{\partial{f_{\alpha}}}{\partial{w}}\cdot\overline{\left({\frac{\partial{f_{\beta}}}{\partial{z}}\cdot{}\frac{\partial{f_{\alpha}}}{\partial{w}}-\frac{\partial{f_{\alpha}}}{\partial{z}}\cdot{}\frac{\partial{f_{\beta}}}{\partial{w}}}\right)}}\\
& =\frac{1}{2}\cdot\sum_{(\alpha{},\beta)\in\mathcal{A}\times\mathcal{A}}^{}{c_{\alpha}c_{\beta}}
\!\begin{aligned}[t] & {\cdot\left(\frac{\partial{f_{\alpha}}}{\partial{z}}\cdot{}\frac{\partial{f_{\beta}}}{\partial{w}}-\frac{\partial{f_{\beta}}}{\partial{z}}\cdot{}\frac{\partial{f_{\alpha}}}{\partial{w}}\right)}\\
& {\cdot\overline{\left({\frac{\partial{f_{\alpha}}}{\partial{z}}\cdot{}\frac{\partial{f_{\beta}}}{\partial{w}}-\frac{\partial{f_{\beta}}}{\partial{z}}\cdot{}\frac{\partial{f_{\alpha}}}{\partial{w}}}\right)}}
\end{aligned}
\nonumber\\
& =\frac{1}{2}\cdot\sum_{(\alpha{,}\beta)\in{}\mathcal{A}\times\mathcal{A}}^{}{c_{\alpha}{}c_{\beta}\left\vert{\frac{\partial{f_{\alpha}}}{\partial{z}}\cdot{}\frac{\partial{f_{\beta}}}{\partial{w}}-\frac{\partial{f_{\beta}}}{\partial{z}}\cdot{}\frac{\partial{f_{\alpha}}}{\partial{w}}}\right\vert^2}\text{.}
\end{align*}
\end{proof}

Let $f_1,f_2,f_3,g,h\colon\mathbb{C}^2\to\mathbb{C}$ be the holomorphic monomials given as follows:
\begin{align*}
f_1(z,w) & =z^2w^2  & f_2(z,w) & =z^{10}w & f_3(z,w)=zw^{10}\\
g(z,w) & =z^4w^2 & h(z,w) & =z^4w^8
\end{align*}
We now define a real-valued polynomial $P$:
\begin{align*}
P:=\left\vert{f_1+f_2+f_3}\right\vert^2{+}\left\vert{g+h}\right\vert^2\text{.}
\end{align*}
It is obvious that $P$ is plurisubharmonic. Intuitively speaking, the Newton diagram $N(P)$ has precisely two extreme edges and lies entirely in the triangle spanned by $N(\vert{}f_1\vert^2)$, $N(\vert{}f_2\vert^2)$ and $N(\vert{}f_3\vert^2)$, with the exception of $N(\vert{}h\vert^2)$, which is ``peaking out'' of the triangle without creating an extreme edge. Both extreme edges correspond to sides of said triangle. The monomials were specifically chosen to have these properties (among others). We will treat this formally:

\theoremstyle{plain}
\newtheorem{newtonofP}[propo]{Lemma}
\begin{newtonofP}
\label{newtonofP}
The Newton diagram of $P$ is the following set:
\begin{align*}
N(P)=\{(4,4),(12,3),(3,12),(20,2),(11,11),(2,20),(8,4),(8,10),(8,16)\}\text{.}
\end{align*}
Furthermore, $N(P)$ has precisely two extreme edges, namely
\begin{align*}
E_1=\{(4,4),(3,12),(2,20)\}\text{ and }E_2=\{(4,4),(12,3),(20,2)\}\text{,}
\end{align*}
and the following holds on $\mathbb{C}^2$:
\begin{align*}
P_{{E_1\cup{}E_2}} & = \vert{f_1+f_3}\vert^2+\vert{f_1+f_2}\vert^2-\vert{f_1}\vert^2\text{,}\\
P_{\text{{\emph{Conv}}}({E_1\cup{}E_2})} & = P-\vert{h}\vert^2\\
& = \left\vert{f_1+f_2+f_3}\right\vert^2{+}\left\vert{g+h}\right\vert^2-\vert{h}\vert^2\text{.}
\end{align*}
\end{newtonofP}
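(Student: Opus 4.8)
The plan is a direct, elementary computation organized around the bihomogeneous decomposition of $P$. First I would expand the two squares:
\begin{align*}
|f_1+f_2+f_3|^2 &= |f_1|^2 + |f_2|^2 + |f_3|^2 + 2\operatorname{Re}(f_1\overline{f_2}) + 2\operatorname{Re}(f_1\overline{f_3}) + 2\operatorname{Re}(f_2\overline{f_3}),\\
|g+h|^2 &= |g|^2 + |h|^2 + 2\operatorname{Re}(g\overline{h}).
\end{align*}
Reading off bidegrees, $|f_1|^2$ has bidegree $(4,4)$, $|f_2|^2$ has $(20,2)$, $|f_3|^2$ has $(2,20)$, $2\operatorname{Re}(f_1\overline{f_2})$ has $(12,3)$, $2\operatorname{Re}(f_1\overline{f_3})$ has $(3,12)$, $2\operatorname{Re}(f_2\overline{f_3})$ has $(11,11)$, $|g|^2$ has $(8,4)$, $|h|^2$ has $(8,16)$, and $2\operatorname{Re}(g\overline{h})$ has $(8,10)$. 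These nine bidegrees are pairwise distinct, so there is no cancellation between bihomogeneous components: every term survives, $N(P)$ is exactly the nine-element set in the statement, and for each listed pair $(A,B)$ the component $P_{(A,B)}$ equals the single corresponding term --- in particular $P_{(4,4)}=|f_1|^2$ and $P_{(8,16)}=|h|^2$.

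Next I would identify the extreme edges. An extreme set of cardinality at least two is precisely a maximal collinear subset of $N(P)$ lying on a supporting line of strictly negative slope, i.e.\ an edge of strictly negative slope of $\text{Conv}(N(P))$, and such edges make up the part of $\partial\,\text{Conv}(N(P))$ facing the origin. One checks that $(2,20),(3,12),(4,4)$ lie on the line $8A+B=36$ (slope $-8$), that $(4,4),(12,3),(20,2)$ lie on the line $A+8B=36$ (slope $-1/8$), and that each of the remaining points $(11,11),(8,4),(8,10),(8,16)$ satisfies \emph{both} $8A+B>36$ and $A+8B>36$ (the tightest case being $(8,4)$, where these expressions equal $68$ and $40$). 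Since $-8<-1/8$, the polygonal chain $(2,20)\to(4,4)\to(20,2)$ is convex; it joins the unique leftmost point of $N(P)$ to the unique bottommost one with every point of $N(P)$ weakly above it, so it is exactly the lower-left boundary of $\text{Conv}(N(P))$. Both of its edges have strictly negative slope, hence $E_1=\{(4,4),(3,12),(2,20)\}$ and $E_2=\{(4,4),(12,3),(20,2)\}$ are the only two extreme edges; no proper subset is an extreme set, since the omitted collinear point lies on, not strictly above, the relevant line.

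For the two identities: the bidegrees occurring in $E_1\cup E_2$ are $(4,4),(3,12),(2,20),(12,3),(20,2)$, so by the first step
\begin{align*}
P_{E_1\cup E_2} = |f_1|^2 + |f_3|^2 + 2\operatorname{Re}(f_1\overline{f_3}) + |f_2|^2 + 2\operatorname{Re}(f_1\overline{f_2}),
\end{align*}
and this is exactly $|f_1+f_3|^2 + |f_1+f_2|^2 - |f_1|^2$, which gives the first formula. Since $(3,12)$ and $(12,3)$ lie on the two edges just described, $\text{Conv}(E_1\cup E_2)$ is the closed triangle with vertices $(2,20),(4,4),(20,2)$, cut out by $8A+B\ge36$, $A+8B\ge36$ and $A+B\le22$. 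Evaluating these three inequalities at the nine points of $N(P)$ shows that all of them lie in the triangle except $(8,16)$, which violates $A+B\le22$ (its value there is $24$), while $(11,11)$ is retained because it lies on the edge $A+B=22$. Hence $N(P)\cap\text{Conv}(E_1\cup E_2)=N(P)\setminus\{(8,16)\}$, and therefore
\begin{align*}
P_{\text{Conv}(E_1\cup E_2)} = P - P_{(8,16)} = P - |h|^2 = |f_1+f_2+f_3|^2 + |g+h|^2 - |h|^2.
\end{align*}

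The only step demanding genuine care is the determination of the extreme edges: one must verify that none of the ``interior'' points $(8,4),(8,10),(8,16),(11,11)$ lies on or below either supporting line $8A+B=36$, $A+8B=36$, so that no spurious third extreme edge is created --- the near-miss at $(8,4)$ being exactly the reason $g$ and $h$ carry those particular exponents. Everything else is routine bookkeeping with the bihomogeneous decomposition.
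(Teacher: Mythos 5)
Your proof is correct, and it is precisely the "straightforward calculation" that the paper explicitly omits: the bihomogeneous expansion with nine pairwise distinct bidegrees, the identification of the two supporting lines $8A+B=36$ and $A+8B=36$ together with the verification that the remaining points lie strictly above both, and the evaluation of the triangle inequalities at the nine points (with only $(8,16)$ excluded). All the numerical checks are accurate, so nothing needs to be added.
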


The proof of Lemma \ref{newtonofP} is a straightforward calculation and will be omitted. It should, however, be remarked that, in light of Lemma \ref{detofcomplhess}, the monomials occurring in the definition of $P$ were chosen so that $P_{{E_1\cup{}E_2}}$ and $P_{\text{Conv}({E_1\cup{}E_2})}$ take this particular form.\ \\

In order to show that (for $P$) the answer to all the questions in Question \ref{problemofthispaper} is ``no'', it suffices to show that both $P_{{E_1\cup{}E_2}}$ and $P_{\text{Conv}({E_1\cup{}E_2})}$ are {\emph{not}} plurisubharmonic in any neighborhood of the origin.\\
By Lemma \ref{detofcomplhess} and Lemma \ref{newtonofP} we have the following on $\mathbb{C}^2$:
\begin{align*}
\det{H_{P_{{E_1\cup{}E_2}}}} & =\phantom{-}\left\vert{\frac{\partial{(f_1+f_3)}}{\partial{z}}\cdot{}\frac{\partial{(f_1+f_2)}}{\partial{w}}-\frac{\partial{(f_1+f_2)}}{\partial{z}}\cdot{}\frac{\partial{(f_1+f_3)}}{\partial{w}}}\right\vert^2{}\\
& \phantom{={}}-\left\vert{\frac{\partial{(f_1+f_3)}}{\partial{z}}\cdot{}\frac{\partial{f_1}}{\partial{w}}-\frac{\partial{f_1}}{\partial{z}}\cdot{}\frac{\partial{(f_1+f_3)}}{\partial{w}}}\right\vert^2{}\\
& \phantom{={}}-\left\vert{\frac{\partial{(f_1+f_2)}}{\partial{z}}\cdot{}\frac{\partial{f_1}}{\partial{w}}-\frac{\partial{f_1}}{\partial{z}}\cdot{}\frac{\partial{(f_1+f_2)}}{\partial{w}}}\right\vert^2\\
& \leq\phantom{-}\left\vert{\frac{\partial{(f_1+f_3)}}{\partial{z}}\cdot{}\frac{\partial{(f_1+f_2)}}{\partial{w}}-\frac{\partial{(f_1+f_2)}}{\partial{z}}\cdot{}\frac{\partial{(f_1+f_3)}}{\partial{w}}}\right\vert^2{}\\
& \phantom{={}}-\left\vert{\frac{\partial{(f_1+f_3)}}{\partial{z}}\cdot{}\frac{\partial{f_1}}{\partial{w}}-\frac{\partial{f_1}}{\partial{z}}\cdot{}\frac{\partial{(f_1+f_3)}}{\partial{w}}}\right\vert^2{}\text{,}\\
\det{H_{P_{\text{Conv}({E_1\cup{}E_2})}}} & =\phantom{-}\left\vert{\frac{\partial{(f_1+f_2+f_3)}}{\partial{z}}\cdot{}\frac{\partial{(g+h)}}{\partial{w}}-\frac{\partial{(g+h)}}{\partial{z}}\cdot{}\frac{\partial{(f_1+f_2+f_3)}}{\partial{w}}}\right\vert^2{}\\
& \phantom{={}}-\left\vert{\frac{\partial{(f_1+f_2+f_3)}}{\partial{z}}\cdot{}\frac{\partial{h}}{\partial{w}}-\frac{\partial{h}}{\partial{z}}\cdot{}\frac{\partial{(f_1+f_2+f_3)}}{\partial{w}}}\right\vert^2{}\\
& \phantom{={}}-\left\vert{\frac{\partial{(g+h)}}{\partial{z}}\cdot{}\frac{\partial{h}}{\partial{w}}-\frac{\partial{h}}{\partial{z}}\cdot{}\frac{\partial{(g+h)}}{\partial{w}}}\right\vert^2\\
& \leq\phantom{-}\left\vert{\frac{\partial{(f_1+f_2+f_3)}}{\partial{z}}\cdot{}\frac{\partial{(g+h)}}{\partial{w}}-\frac{\partial{(g+h)}}{\partial{z}}\cdot{}\frac{\partial{(f_1+f_2+f_3)}}{\partial{w}}}\right\vert^2{}\\
& \phantom{={}}-\left\vert{\frac{\partial{(g+h)}}{\partial{z}}\cdot{}\frac{\partial{h}}{\partial{w}}-\frac{\partial{h}}{\partial{z}}\cdot{}\frac{\partial{(g+h)}}{\partial{w}}}\right\vert^2{}\text{.}
\end{align*}

So, by plugging in and calculating, we get the following inequalities on $\mathbb{C}^2$:
\begin{align*}
& \phantom{\leq}\det{H_{P_{{E_1\cup{}E_2}}}(z,w)}\\
& \leq\phantom{-}\left\vert{(2zw^2+w^{10})\cdot{}(2z^2w+z^{10})-(2zw^2+10z^9w)\cdot{}(2z^2w+10zw^9)}\right\vert^2\\
& \phantom{\leq{}}-\left\vert{(2zw^2+w^{10})\cdot{}2z^2w-2zw^2\cdot{}(2z^2w+10zw^9)}\right\vert^2\\
& =\phantom{-}\left\vert{z^2w^2(99z^8w^8+18z^9+18w^9)}\right\vert^2\\
& \phantom{\leq{}}-\left\vert{18z^2w^{11}}\right\vert^2\text{,}
\end{align*}
and
\begin{align*}
& \phantom{\leq}\det{H_{P_{\text{Conv}({E_1\cup{}E_2})}}(z,w)}\\
& \leq\phantom{-}\vert{(2zw^2+10z^9w+w^{10})\cdot{}(2z^4w+8z^4w^7)}\\
& \phantom{\leq{}-\vert}{-(4z^3w^2+4z^3w^8)\cdot{}(2z^2w+z^{10}+10zw^9)}\vert^2\\
& \phantom{\leq{}}-\left\vert{(4z^3w^2+4z^3w^8)\cdot{8z^4w^7}-4z^3w^8\cdot{}(2z^4w+8z^4w^7)}\right\vert^2\\
& =\phantom{-}\left\vert{-2z^4w^2(16w^{15}-38w^6z^9+19w^9-8z^9-4zw^7+2zw)}\right\vert^2\\
& \phantom{\leq{}}-\left\vert{24z^7w^9}\right\vert^2\text{.}
\end{align*}

We define two holomorphic polynomials $Q_1,Q_2\colon\mathbb{C}^2\to\mathbb{C}$ as follows:
\begin{align*}
Q_1(z,w) & =99z^8w^8+18z^9+18w^9\text{,}\\
Q_2(z,w) & =16w^{15}-38w^6z^9+19w^9-8z^9-4zw^7+2zw\text{,}
\end{align*}
i.e.\ we have on $\mathbb{C}^2$:
\begin{align*}
\det{H_{P_{{E_1\cup{}E_2}}}(z,w)} & \leq\phantom{-}\left\vert{z^2w^2Q_1(z,w)}\right\vert^2\\
& \phantom{\leq{}}-\left\vert{18z^2w^{11}}\right\vert^2\text{,}\\
\det{H_{P_{\text{Conv}({E_1\cup{}E_2})}}(z,w)} & \leq\phantom{-}\left\vert{-2z^4w^2Q_2(z,w)}\right\vert^2\\
& \phantom{\leq{}}-\left\vert{24z^7w^9}\right\vert^2\text{.}
\end{align*}

Since $Q_1$ is a non-constant holomorphic polynomial on $\mathbb{C}^2$, its vanishing set $V(Q_1)$ is an equidimensional affine algebraic variety of dimension $1$ containing $(0,0)$. For $(z,w)\in{}V(Q_1)$ we have
\begin{align*}
\det{H_{P_{{E_1\cup{}E_2}}}(z,w)}\leq{}-\left\vert{18z^2w^{11}}\right\vert^2\text{,}
\end{align*}
so that it suffices to show that $V(Q_1)$ contains points $(z,w)$ with $z\neq{}0,w\neq{}0$ arbitrarily close to $(0,0)$. But that is clear, since both $Q_1(\cdot,0)$ and $Q_1(0,\cdot)$ are non-constant holomorphic polynomials on $\mathbb{C}$ and as such have finitely many zeroes.

Hence $P_{{E_1\cup{}E_2}}$ is not plurisubharmonic in any neighborhood or the origin. By considering $Q_2$ instead of $Q_1$, we analogously get that $P_{\text{Conv}({E_1\cup{}E_2})}$ is not plurisubharmonic in any neighborhood of the origin.

\bibliographystyle{amsplain}
\bibliography{refs}

\end{document}